\newcounter{thmcounter}
\theoremstyle{plain}
\newtheorem{thm}{Theorem}[thmcounter]
\newtheorem{lem}[thm]{Lemma}
\newtheorem{prop}[thm]{Proposition}
\theoremstyle{definition}
\theoremstyle{remark}
\newtheorem{rem}{Remark}[thmcounter]
\newcommand{\hooklongrightarrow}{\lhook\joinrel\longrightarrow}
\newcommand{\hooksuperlongrightarrow}{\lhook\joinrel\joinrel\joinrel{-}
\joinrel\joinrel\joinrel\joinrel\joinrel{-}
\joinrel\joinrel\joinrel\joinrel\joinrel{-}
\joinrel\joinrel\joinrel\joinrel\joinrel{-}
\joinrel\joinrel\joinrel\joinrel\joinrel{-}
\joinrel\joinrel\joinrel\joinrel\joinrel{-}
\joinrel\joinrel\joinrel\longrightarrow}
\newcommand{\hookssuperlongrightarrow}{\lhook\joinrel\joinrel\joinrel{-}
\joinrel\joinrel\joinrel\joinrel\joinrel{-}
\joinrel\joinrel\joinrel\joinrel\joinrel{-}
\joinrel\joinrel\joinrel\joinrel\joinrel{-}
\joinrel\joinrel\joinrel\joinrel\joinrel{-}
\joinrel\joinrel\joinrel\joinrel\joinrel{-}
\joinrel\joinrel\joinrel\joinrel\joinrel{-}
\joinrel\joinrel\joinrel\joinrel\joinrel{-}
\joinrel\joinrel\joinrel\joinrel\joinrel{-}
\joinrel\joinrel\joinrel\joinrel\joinrel{-}
\joinrel\joinrel\joinrel\joinrel\joinrel{-}
\joinrel\joinrel\joinrel\joinrel\joinrel{-}
\joinrel\joinrel\joinrel\joinrel\joinrel{-}
\joinrel\joinrel\joinrel\longrightarrow}
\newcommand{\hooksuperlongrrightarrow}{\lhook\joinrel\joinrel\joinrel{-}
\joinrel\joinrel\joinrel\joinrel\joinrel{-}
\joinrel\joinrel\joinrel\joinrel\joinrel{-}
\joinrel\joinrel\joinrel\joinrel\joinrel{-}
\joinrel\joinrel\joinrel\joinrel\joinrel{-}
\joinrel\joinrel\joinrel\joinrel\joinrel{-}
\joinrel\joinrel\joinrel\joinrel\joinrel{-}
\joinrel\joinrel\joinrel\joinrel\joinrel{-}
\joinrel\joinrel\joinrel\joinrel\joinrel{-}
\joinrel\joinrel\joinrel\joinrel\joinrel{-}
\joinrel\joinrel\joinrel\joinrel\joinrel{-}
\joinrel\joinrel\joinrel\joinrel\joinrel{-}
\joinrel\joinrel\joinrel\longrightarrow}
\newcommand{\hooksuperllongrightarrow}{\lhook\joinrel\joinrel\joinrel{-}
\joinrel\joinrel\joinrel\joinrel\joinrel{-}
\joinrel\joinrel\joinrel\joinrel\joinrel{-}
\joinrel\joinrel\joinrel\joinrel\joinrel{-}
\joinrel\joinrel\joinrel\joinrel\joinrel{-}
\joinrel\joinrel\joinrel\joinrel\joinrel{-}
\joinrel\joinrel\joinrel\joinrel\joinrel{-}
\joinrel\joinrel\joinrel\joinrel\joinrel{-}
\joinrel\joinrel\joinrel\joinrel\joinrel{-}
\joinrel\joinrel\joinrel\longrightarrow}
\begin{document}

\title{Minimal fibrations and the organizing theorem of simplicial homotopy theory}

\author{Hiroshi Kihara \\ Center for Mathematical Sciences, University of Aizu\\ Tsuruga, Ikki-machi, Aizu-wakamatsu City, Fukushima, 965-8580, Japan.\\
Tel.: (+81)-242-37-2645\\
Fax: (+81)-242-37-2752\\
E-mail: kihara@u-aizu.ac.jp}
\date{}
%


\maketitle

\begin{abstract}
Quillen showed that simplicial sets form a model category (with appropriate choices of three classes of morphisms), which organized the homotopy theory of simplicial sets. 
His proof is very difficult and 
uses even the classification theory of principal bundles. Thus
, Goerss-Jardine 
 appealed to topological methods for the verification. 
 In this paper we give a new proof of this organizing theorem of simplicial homotopy theory
 which is elementary in the sense that it does not use the classifying theory of principal bundles or appeal to topological methods.
\\
\\
${\bf Key words:}$ Simplicial sets $\cdot$ Simplicial homotopy theory $\cdot$ Model category $\cdot$ Minimal fibrations
\\
$\bf Mathematics$ $\bf Subject$ $\bf Classification\ (2010)$
18G30 - 18G55 - 55U10
\end{abstract}

\section{Introduction}
The homotopy theory of simplicial sets was developed mostly by Kan in the 1950's. 
His work made homotopy theory independent of general topology.
In the 1960's Quillen [13]
organized the homotopy theory of simplicial sets in the framework of
model categories which is a modern foundation of homotopy theory.
The category of simplicial sets is not only an example of a model category
but also an indispensable ingredient of model category theory (cf. [4-6]).
Thus simplicial sets and their model structure have become essential in various field of mathematics as model categories 
have been important in abstract homotopy theory and its applications [2, 7, 10-12].

The organizing theorem of simplicial homotopy theory asserts that simplicial
sets form a model category. By definition a model category is just an ordinary category with three specified
classes of morphisms, called fibrations, cofibrations and weak
equivalences, which satisfy several axioms. Thus the proof of the organizing
theorem is nothing but verification of the model axioms for the
category of simplicial sets and it is, as Goerss and Jardine [4, p. 2]
say,``still one of the more difficult proofs of abstract homotopy theory''
 (see also [6, p. 73]).  
Quillen's method [13] of verification is purely simplicial. But he was ``unable to find a really elementary proof''; 
it uses even the classification theory of principal bundles due to Barratt-Guggenheim-Moore [1] ([13, Introduction of Chapter II]). 
Thus 
Goerss and Jardine [4, Chapter I] 
appeal to topological methods to show the organizing theorem of simplicial homotopy theory (especially note the definition 
of a weak equivalence [4, p. 60] and the result on the realization of a Kan fibration [4, pp. 54-59]).
But the principle of simplicial homotopy theory is to develope homotopy theory excluding topological methods, hence the proof of the organizing theorem should be purely simplicial but not topological.

Main objective of this paper is to give a 
proof of the following organizing theorem of simplicial homotopy 
theory which is elementary in the sense that it does not use the classifying theory of principal bundles or appeal to topological methods. 

Let {$\bf S$} denote the category of simplicial sets.
If $Z$ is a Kan complex, the homotopy set $[X,Z]$ is defined as the set 
of homotopy classes of maps from $X$ to $Z$ (see Section 2).
\setcounter{section}{1}
\begin{thm}
Define a map $f:X \longrightarrow Y$ in $\bf S$ to be 
\newline
\hspace{1em}(1) a fibration if the map $f$ is a Kan fibration,
\newline
\hspace{1em}(2) a weak equivalence if the map $f$ induces a bijection $f^\#: [Y,Z] \longrightarrow [X,Z]$ for any Kan 
\newline
\hspace{2.5em}complex $Z$, and 
\newline
\hspace{1em}(3) a cofibration if the map $f$ has the left lifting property with respect to each map which 
\newline
\hspace{0.9cm}is both a fibration and a weak equivalence.
\newline
Then with these choices $\bf S$ is a model category.
\end{thm}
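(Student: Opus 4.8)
The plan is to verify the model category axioms (CM1–CM5 in Quillen's numbering) for the three classes defined in the statement, treating cofibrations as exactly the monomorphisms — so the first substantial task is to prove that a map is a cofibration in the sense of (3) if and only if it is injective in each degree. One inclusion is easy: since every trivial Kan fibration has the right lifting property with respect to all boundary inclusions $\partial\Delta^n\hookrightarrow\Delta^n$, a standard skeletal induction shows every monomorphism has the left lifting property with respect to trivial fibrations. For the converse one uses a retract argument: any $f$ with the left lifting property against all trivial fibrations is a retract of a relative-$\{\partial\Delta^n\hookrightarrow\Delta^n\}$-cell complex (obtained by the small object argument), hence a monomorphism. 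CM1 (completeness and cocompleteness) is immediate since $\bf S$ is a presheaf category; CM2 (two-out-of-three) and CM3 (retracts) for weak equivalences follow formally from the definition via $[-,Z]$, since bijections satisfy two-out-of-three and are closed under retracts, and likewise for Kan fibrations and for monomorphisms.

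The heart of the proof is the factorization axiom CM5 and the lifting axiom CM4. For CM5(ii), factoring $f$ as a cofibration followed by a trivial fibration, I would run the small object argument on the set $\{\partial\Delta^n\hookrightarrow\Delta^n\}_{n\ge 0}$: the left factor is a cell complex hence a monomorphism, and the right factor has the right lifting property against all $\partial\Delta^n\hookrightarrow\Delta^n$, which by the skeletal-induction lemma above means it is a trivial fibration — but here "trivial fibration" must be shown to coincide with "Kan fibration $+$ weak equivalence". This is where the genuinely simplicial input enters: I must show that a Kan fibration is a weak equivalence (in the $[-,Z]$ sense) precisely when it has the right lifting property against all boundary inclusions. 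The plan is to invoke the theory of minimal fibrations (promised by the paper's title and, presumably, developed in the sections preceding this theorem): every Kan fibration is fibrewise homotopy equivalent to a minimal fibration, a minimal fibration that is a weak equivalence is an isomorphism onto its image restricted to each fibre... more precisely, a minimal fibration which is a weak equivalence is a trivial fibration, and trivial fibrations and the class of weak equivalences are both invariant under fibrewise homotopy equivalence over the base. Granting the minimal fibration package, one deduces that a Kan fibration is acyclic iff it is a trivial fibration, which simultaneously completes CM5(ii) and, together with CM5(i) below, feeds CM4.

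For CM5(i), factoring $f$ as a trivial cofibration followed by a fibration, I would run the small object argument on the set of horn inclusions $\{\Lambda^n_k\hookrightarrow\Delta^n\}_{0\le k\le n,\ n\ge 1}$: the right factor is then a Kan fibration, and the left factor $j$ is an anodyne extension, so I must check that anodyne extensions are weak equivalences. This again uses minimal fibrations: anodyne maps are monomorphisms with the left lifting property against all Kan fibrations, and one shows such a map induces a bijection on $[-,Z]$ for every Kan complex $Z$ by a direct mapping-space argument (an anodyne map $j:A\hookrightarrow B$ induces $Z^B\to Z^A$ which is a trivial fibration when $Z$ is Kan, hence surjective on vertices and on $\pi_0$, giving surjectivity and injectivity of $j^\#$). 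The lifting axiom CM4 then has two halves: a trivial cofibration has the left lifting property against fibrations — for this I would show every trivial cofibration is a retract of an anodyne map using the CM5(i) factorization plus the retract trick, which requires knowing that a monomorphism which is a weak equivalence and has the LLP against... — and dually a cofibration has the LLP against trivial fibrations, which is the skeletal-induction lemma already established.

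The main obstacle, as usual for this theorem, is the implication that an \emph{acyclic} Kan fibration (weak equivalence in the homotopy-set sense) is a trivial fibration, equivalently that anodyne extensions are weak equivalences and that the resulting class of weak equivalences is well-behaved; classically this is exactly the point where geometric realization or the classification of principal bundles is invoked. The whole strategy hinges on the minimal fibration machinery developed earlier in the paper replacing that input: one needs that every Kan fibration admits a minimal model, that minimal fibrations with contractible-ish behaviour are isomorphisms, and that fibrewise homotopy equivalence over a fixed base preserves both Kan fibrations and the property of being a weak equivalence. Once those are in hand, CM4 and CM5 close up and CM1–CM3 are formal, completing the verification that $\bf S$ is a model category.
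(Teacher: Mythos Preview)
Your overall architecture matches the paper's: small object arguments on the horn and boundary inclusions give the two factorizations, anodyne extensions are shown to be acyclic cofibrations by a function-complex argument (the paper's Lemma~2.1), the retract trick identifies acyclic cofibrations with anodyne maps, and the crux is the characterization ``acyclic fibration $\Longleftrightarrow$ RLP against $\partial\Delta^n\hookrightarrow\Delta^n$'' (the paper's Lemma~2.4), to be proved via minimal fibrations. One organizational difference: you front-load the identification cofibration $=$ monomorphism, whereas the paper works directly with the LLP definition throughout and only records that identification afterwards (Proposition~3.1); either order is fine, but note that both directions of your identification already presuppose Lemma~2.4.

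There is, however, a genuine gap in your plan for the hard direction of Lemma~2.4. You write that ``a minimal fibration which is a weak equivalence is a trivial fibration'' and list as an ingredient that ``minimal fibrations with contractible-ish behaviour are isomorphisms.'' The problem is getting from ``$\varphi:E\to Y$ is a minimal fibration and a weak equivalence'' to ``the fiber $M$ has trivial homotopy groups'' when the base $Y$ is an \emph{arbitrary} simplicial set. If $Y$ were Kan, then $E$ would be Kan too, Lemma~2.3 would make $\varphi$ a homotopy equivalence, and the long exact sequence would kill $\pi_*(M)$. But $Y$ is not assumed Kan, so neither $E$ nor $Y$ need be Kan complexes and none of that machinery applies. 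The paper's specific contribution here is Lemma~2.2: a minimal fibration is a fiber bundle, and an $F$-bundle over $Y$ can be extended to an $F$-bundle $\varphi':E'\to\tilde Y$ over the fibrant approximation, with the inclusion $E\hookrightarrow E'$ an anodyne extension. Now $\tilde Y$ and $E'$ are Kan, $\varphi'$ is still a weak equivalence (by two-out-of-three), hence a homotopy equivalence by Lemma~2.3, and the long exact sequence forces $M=\ast$. Without this bundle-extension step your sketch does not close, and it is precisely this step that replaces the Barratt--Guggenheim--Moore classification theory in Quillen's original argument; you should make it explicit.
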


In fact we show that the category $\bf S$ with the specified classes of morphisms satisfies stronger model axioms, which are adopted in [5] (c.f. Section 2). In other words the category $\bf S$ is complete and cocomplete, and the factorizations described in the so-called factorization axiom are functorially constructed.

We adopt the definition of a weak equivalence different from those of Quillen [13, 3.14 in Chapter II] and Goerss-Jardine [4, p. 60], which is a key to simplifying the proof of the organizing theorem.
We also check that our model structure coincides with theirs.

\begin{prop}
The model structure of $\bf S$ in Theorem 1.1 coincides with those of Quillen and Goerss-Jardine.
\end{prop}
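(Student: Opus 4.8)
The plan is to reduce the proposition to the single assertion that the three classes of weak equivalences coincide. Indeed, all three model structures have the same fibrations, the Kan fibrations: this is condition~(1) of Theorem~1.1 for the present structure, and it is the definition of a fibration in both [13] and [4]. In any model category the trivial fibrations are the maps that are at once fibrations and weak equivalences, and the cofibrations are exactly the maps having the left lifting property with respect to all trivial fibrations; hence a model structure is determined by its fibrations together with its weak equivalences. So the three structures all coincide as soon as they have the same weak equivalences, and it remains to show that the weak equivalences of Theorem~1.1 agree with those of [13] and of [4].

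The tool I would use is the following lemma, stated and proved in general. Suppose $\mathbf{S}$ carries a model structure in which every object is cofibrant and the fibrant objects are exactly the Kan complexes --- this holds for the structures of [13] and of [4], in each of which the cofibrations are the monomorphisms and the fibrations are the Kan fibrations. Then $f\colon X\to Y$ is a weak equivalence for this structure if and only if $f^{\#}\colon\pi(Y,Z)\to\pi(X,Z)$ is a bijection for every Kan complex $Z$, where $\pi(U,V)$ denotes the set of homotopy classes of maps $U\to V$ with respect to the structure. The proof is formal: $f$ is a weak equivalence if and only if its image in the homotopy category $\mathrm{Ho}(\mathbf{S})$ is an isomorphism (a standard consequence of Whitehead's theorem in a model category); by the Yoneda lemma this holds if and only if $f^{\#}\colon\mathrm{Ho}(\mathbf{S})(Y,W)\to\mathrm{Ho}(\mathbf{S})(X,W)$ is a bijection for every object $W$; and since every object of $\mathrm{Ho}(\mathbf{S})$ is isomorphic to a Kan complex, while $\mathrm{Ho}(\mathbf{S})(U,Z)$ is canonically identified with $\pi(U,Z)$ when $U$ is cofibrant and $Z$ is fibrant, it is enough to let $W$ range over the Kan complexes.

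Next I would identify $\pi(X,Z)$, computed in the structure of [13] or of [4], with the homotopy set $[X,Z]$ of Section~2 whenever $Z$ is a Kan complex. In each of those structures the cofibrations are the monomorphisms, so $X\times\partial\Delta^{1}\hookrightarrow X\times\Delta^{1}$ is a cofibration; together with the fact that the projection $X\times\Delta^{1}\to X$ is a weak equivalence, this exhibits $X\times\Delta^{1}$ as a cylinder object for $X$. Hence, for $X$ cofibrant and $Z$ fibrant, $\pi(X,Z)$ --- which may be computed with any cylinder object --- is precisely the set of $\Delta^{1}$-homotopy classes of maps $X\to Z$, that is, $[X,Z]$. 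Combining this with the lemma, the weak equivalences of both [13] and [4] are exactly the maps $f$ for which $f^{\#}\colon[Y,Z]\to[X,Z]$ is a bijection for every Kan complex $Z$; by condition~(2) of Theorem~1.1 these are precisely the weak equivalences of the present paper. With the reduction of the first paragraph this proves that the three model structures coincide, and incidentally reproves that the structures of [13] and [4] agree. (If the cofibrations of Theorem~1.1 have already been identified with the monomorphisms, the argument is even shorter: all three structures then visibly share both their cofibrations and their fibrations.)

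I do not expect a genuine obstacle here, only a little bookkeeping. One must make sure that the homotopy relation used to define $[X,Z]$ in Section~2 is literally $\Delta^{1}$-homotopy, so that it coincides with the model-categorical homotopy relation; that ``fibration $=$ Kan fibration'' forces the fibrant objects of [13] and [4] to be exactly the Kan complexes; and that $X\times\Delta^{1}\to X$ is a weak equivalence in those structures, which is immediate (for instance from the pushout--product axiom together with the weak equivalence $\Delta^{1}\to\Delta^{0}$). I would emphasise that this argument uses only the \emph{statements} of the model structures of [13] and [4] --- not their proofs, and in particular not the topological arguments of [4] --- so no circularity is introduced and the elementary character of the proof of Theorem~1.1 is not compromised.
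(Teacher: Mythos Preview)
Your argument is correct, but the paper takes a shorter route. Rather than matching the weak equivalences directly, the paper first proves Proposition~3.1 --- that the cofibrations of Theorem~1.1 are exactly the monomorphisms --- and then invokes the general fact (cited as \cite[Proposition~7.2.7]{no5}) that in any model category the fibrations and cofibrations together determine the weak equivalences. Since all three candidate structures share both their fibrations (Kan fibrations) and their cofibrations (monomorphisms), they coincide. This is precisely the shortcut you mention parenthetically in your final sentence; the paper simply takes that shortcut from the outset, having already established Proposition~3.1. Your approach trades the identification of cofibrations for some homotopy-category bookkeeping (Whitehead's theorem, cylinder objects, the identification $\pi(X,Z)=[X,Z]$); it has the virtue of yielding directly the intrinsic characterization of weak equivalences that motivated the paper's definition, but it is longer and leans on slightly more model-categorical machinery. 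One small point: your justification that $X\times\Delta^1\to X$ is a weak equivalence via the pushout--product axiom tacitly uses the \emph{simplicial} model structure; it is cleaner to note that the section $X\hookrightarrow X\times\Delta^1$ is anodyne, hence a trivial cofibration in the structures of \cite{no9} and \cite{no4}, and apply two-out-of-three.
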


The difficulty of checking the model axioms for the category $\bf S$ is having to deal with general simplicial sets which need not to be Kan complexes, which is different from the case of the category of topological spaces. Thus we make effective use of the powerful theory of minimal fibrations whose topological analogue does not exist (see the arguments used in the proofs of Lemmas 2.4 and 3.3). 

The proof of Theorem 1.1 is given in Section 2; a purely simplicial
proof of Lemma 2.4 is crucial. 
In section 3, we investigate cofibrations and weak equivalences in the category $\bf S$. The investigation of cofibrations implies Proposition 1.2. An alternative proof of Lemma 2.4, which is the heart of the proof of Theorem 1.1, is given in Section 4. One of the advantage of this proof is that it does not need any knowledge of higher homotopy groups and homotopy exact sequences.

Refer to [4] for the fundamental notions and results of simplicial homotopy
theory and [3] for the basic facts and the terminology of model category
theory. (See also [4] and [5] for more details on model
categories.) The recently published book [9] by May and Ponte is also relevant to our subject. It contains another simplified proof of the organizing theorem of simplicial homotopy theory, which makes a little use of topology.

\section{Proof of Theorem 1.1}

We begin by recalling the standard notations and fundamental notions of the theory of simplicial sets.

Let ${\Delta}^p$ denote the {$\it standard$} {$\it p$}-{$\it simplex$} and ${\Delta}^p_i$ its {$\it i$}-{$\it th$} {$\it face$}. Let $\dot{\Delta}^p$ and ${\Lambda}^p_k$ denote the {$\it boundary$} and the $k$-{$\it th$} {$\it horn$} of ${\Delta}^p$ respectively. Thus $\dot{\Delta}^p$ is just the subcomplex of ${\Delta}^p$ generated by ${\Delta}^p_i\hspace{2mm}(0 \leq i \leq p)$ and ${\Lambda}^p_k$ is just the subcomplex of ${\Delta}^p$ generated by ${\Delta}^p_i\hspace{2mm}(0 \leq i \leq p, i \neq k)$. The standard 1-simplex ${\Delta}^1$ and its boundary $\dot{{\Delta}^1}$ are also denoted by {$I$} and {$\dot{I}$} respectively.

A simplicial map $f:X \longrightarrow Y$ is called a {$\it Kan$} {$\it fibration$} iff $f$ has the right lifting property with respect to standard inclusions ${\Lambda}^p_k \longrightarrow {\Delta}^p \hspace{1mm}(p > 0,\hspace{1mm}0 \leq k \leq p)$. A simplicial set $X$ is called a {$\it Kan$} {$\it complex$} iff the canonical map from $X$ to a terminal object $\ast$ is a Kan fibration. 

Let $f$, $g$ : $X \longrightarrow Y$ be simplicial maps. We say $f$ is ${\it homotopic}$ to $g$, written $f$ $\simeq$ $g$, if there exists a commutative diagram
$$
\begin{picture}(70,70)(-5,-70)
\point{H0}( 0, 0){$X \times \dot{I}$}
\point{H1}( 0, -70){$X \times I$}
\point{H2}( 70,-70){$Y$}
\arrow{H0}{H1} 
\arrow{H0}{H2} \midput( 4, 0){$f+g$}
\arrow{H1}{H2} \midput( 0, 2){$h$}
\end{picture}
$$
where the vertical arrow is the product of $1_X$ and the standard inclusion. The map $h$ is called a ${\it homotopy}$. If $Y$ is a Kan complex, the hotomopy relation $\simeq$ is an equivalence relation ([4, Lemma 6.1 in Chapter I]). Thus 
the ${\it homotopy}$ ${\it set}$ $[X,Y]$ is defined as the quotient set Hom($X$,$Y$)/$\simeq$.

We make full use of the Gabriel-Zisman theory of anodyne extensions ([4, Section 4 in 
Chapter I]) which engulfs old combinatorial arguments.





Let us verify the following model axioms which are stronger than Quillen's as stated in Section 1:\\
$\bf MC1$: $\bf S$ is closed under all small limits and colimits.\\
$\bf MC2$: If $f$ and $g$ are maps in $\bf S$ such that $gf$ is defined and if two of the three maps $f$, $g$, 
\newline
\hspace{1.1cm}$gf$ are weak equivalences, then so is the third.\\
$\bf MC3$: If $f$ is a retract of $g$ and $g$ is a weak equivalence, fibration, or cofibration, then so is 
\newline
\hspace{1.1cm}$f$.\\
$\bf MC4$: Given a commutative solid arrow diagram
$$
\xymatrix @r@R=13mm @r@C=17mm{
{A} \ar[d]^i \ar[r] \ar@{}[dr]   & X \ar[d]^{p} \\
 B \ar@^{-->}[ur] \ar[r] & Y \\
}
$$
\\
\hspace{1.1cm}the dotted arrow exists, making the diagram commute, if either\\
\hspace{1.1cm}(i) $i$ is a cofibration and $p$ is an acyclic fibration 
(i.e., a fibration that is also a weak \\ 
\hspace{1.6cm}equivalence), or \\
\hspace{1.1cm}(ii) $i$ is an acyclic cofibration (i.e., a cofibration that is
also a weak equivalence) and \\
\hspace{1.6cm}$p$ is a fibration.\\
$\bf MC5$: Every map $f$ has two functorial factorizations:\\
\hspace{1.1cm}(i) $f$ = $qj$, where $j$ is a cofibration and $q$ is an acyclic 
fibration, and\\ 
\hspace{1.1cm}(ii) $f$ = $pi$, where $i$ is an acyclic cofibration and $p$ is a fibration.\\

Since ${\bf S} = Set^{{\Delta}^{op}}$, limits and colimits are objectwise constructed. Thus ${\bf MC1}$ is satisfied.\\
${\bf MC2}$ is obvious. ${\bf MC3}$ is not difficult (cf. [3, 2.7 and 8.10]).

To prove the factorization axiom ${\bf MC5}$(ii), note that $f:X \longrightarrow Y$ is a fibration iff $f$ has the right lifting property with respect to standard inclusions ${\Lambda}^p_k \longrightarrow {\Delta}^p \hspace{1mm}(p > 0,\hspace{1mm}0 \leq k \leq p)$.
Since ${\Lambda}^p_k$ is sequentially small, we can apply the small object argument ([3, p. 104]) to ${\mathscr F} = \{{\Lambda}^p_k \longrightarrow {\Delta}^p \hspace{1mm}(p > 0,\hspace{1mm}0 \leq k \leq p)\}$. Then we obtain a factorization
$$
\begin{picture}(70,70)(-5,-50)
\point{H0}( 0, 0){$X$}
\point{H1}( 70, 0){$G^{\infty}(\mathscr F,f)$}
\point{H2}( 70,-70){$Y$}
\arrow{H0}{H1} \midput(  5, 3){\hss{$i_{\infty}$}\hss}
\arrow{H0}{H2} \midput( -5, -5){\cbox{$f$}}
\arrow{H1}{H2} \midput( 0, -5){\hspace{1mm}{$p_{\infty}$}}
\end{picture}
$$
\\
\newline
such that $p_{\infty}$ is a fibration and $G^{\infty}(\mathscr F,f)$ is the direct limit of a sequence of the form 
$$ X = G^{0}(\mathscr F,f)\hspace{1mm} \overset{i_1}{\hooklongrightarrow} 
\hspace{1mm} G^{1}(\mathscr F,f) \hspace{1mm} \overset{i_2}{\hooklongrightarrow} \cdot\cdot\cdot \overset{i_n}{\hooklongrightarrow} \hspace{1mm} G^{n}(\mathscr F,f) \hspace{1mm}{\hooklongrightarrow} \hspace{1mm}
\cdot\cdot\hspace{0.75mm}\cdot 
$$ 
where each $i_n$ fits into a pushout diagram of the form
$$
\begin{picture}(70,70)(0,-60)
\point{H0}( 0, 0){$\underset{\lambda \in \Lambda_n}\coprod{{\Lambda}^{p_\lambda}_{k_\lambda}}$}
\point{H1}( 120, 0){$\underset{\lambda \in \Lambda_n}\coprod{\Delta}^{p_\lambda}$}
\point{H2}( 0, -60){$G^{n-1}(\mathscr F,\hspace{1mm} f)$}
\point{H3}( 120, -60 ){$G^{n}(\mathscr F,\hspace{1mm} f)$}
\point{H4}( 0, 4) { \hspace{2.1cm}}
\point{H5}( 120, 4){ \hspace{2cm}}
\arrow{H1}{H3}
\arrow{H0}{H2} 
\point{H6}( 63, -60){$\hooksuperllongrightarrow$} \midput(60, -26){$i_n$} \midput( 150, -35){$.$}
\point{H7}( 61, 3){$\hooksuperlongrrightarrow$} 
\end{picture}
$$
Since the class of anodyne extensions is saturated by definition, $i_{\infty}$ is an anodyne extension.
Thus we see that this factorization is the desired one by the following lemma.
\stepcounter{thmcounter}
\begin{lem}
Let ${i : A \longrightarrow B}$ be an anodyne extension. Then $i$ is an acyclic
cofibration.
\end{lem}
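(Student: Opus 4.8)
The plan is to verify the two defining conditions of an acyclic cofibration separately: that $i$ is a cofibration, and that $i$ is a weak equivalence.

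That $i$ is a cofibration will follow at once from the Gabriel--Zisman theory. Since the class of anodyne extensions is the saturated class generated by the horn inclusions $\{{\Lambda}^p_k \longrightarrow {\Delta}^p\}$, every anodyne extension has the left lifting property with respect to every map having the right lifting property with respect to all horn inclusions, that is, with respect to every Kan fibration ([4, Section 4 in Chapter I]). An acyclic fibration is in particular a Kan fibration, so $i$ has the left lifting property with respect to every acyclic fibration; hence $i$ is a cofibration.

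For the weak equivalence condition I would fix a Kan complex $Z$ and show that $i^\#\colon [B,Z] \longrightarrow [A,Z]$ is a bijection. Surjectivity is immediate: since $Z$ is a Kan complex the canonical map $Z \longrightarrow \ast$ is a Kan fibration, so the anodyne extension $i$ lifts against it, and every map $A \longrightarrow Z$ extends over $i$ to a map $B \longrightarrow Z$. For injectivity I must show that $fi \simeq gi$ implies $f \simeq g$ for maps $f,g\colon B \longrightarrow Z$. The key input is the standard closure of anodyne extensions under pushout--product with a monomorphism ([4, Section 4 in Chapter I]): forming the pushout--product of the anodyne extension $i$ with the standard inclusion $\dot I \hooklongrightarrow I$ shows that
$$
(A \times I) \cup_{A \times \dot I} (B \times \dot I) \hooklongrightarrow B \times I
$$
is an anodyne extension. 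Now a homotopy $H\colon A \times I \longrightarrow Z$ from $fi$ to $gi$ and the map $f+g\colon B \times \dot I \longrightarrow Z$ agree on $A \times \dot I$, hence glue to a single map out of the domain of the displayed inclusion; since $Z$ is a Kan complex this map extends along that anodyne inclusion to a homotopy $\widetilde H\colon B \times I \longrightarrow Z$ from $f$ to $g$. Because $\simeq$ is an equivalence relation on maps into the Kan complex $Z$, this shows $i^\#$ is injective, and together with surjectivity, bijective; so $i$ is a weak equivalence.

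The one step requiring genuine care is the identification of the inclusion $(A \times I) \cup_{A \times \dot I} (B \times \dot I) \hooklongrightarrow B \times I$ as an anodyne extension, together with the bookkeeping of the two partial homotopies along the faces $A \times \dot I$; everything else is a direct unwinding of the defining lifting property of anodyne extensions against Kan fibrations. I would expect this argument to be considerably shorter than one routed through higher homotopy groups and homotopy exact sequences.
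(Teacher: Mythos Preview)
Your proof is correct and follows essentially the same route as the paper's: both establish the cofibration property via the left lifting property of anodyne extensions against Kan fibrations ([4, Corollary~4.3 in Chapter~I]), and both deduce bijectivity of $i^\#$ from the fact that $A \times I \cup B \times \dot I \hookrightarrow B \times I$ is anodyne ([4, Corollary~4.6 in Chapter~I]). The paper simply compresses your explicit surjectivity/injectivity argument into the two citations, leaving the extension/homotopy-lifting steps implicit.
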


\begin{proof}
Since ${i : A \longrightarrow B}$ is an anodyne extension, the inclusion
$$A \times I \cup B \times \dot{I} \hooklongrightarrow B \times I$$
is also an anodyne extension by [4, Corollary 4.6 in Chapter I].
Therefore
$$i^{\sharp} : [B, Z] \longrightarrow [A, Z]$$
is bijective for any Kan complex $Z$ (cf. [4, Corollary 4.3 in Chapter I]),
which shows that $i$ is a weak equivalence. Since an anodyne extension is a
cofibration by [4, Corollary 4.3 in Chapter I], $i$ is an acyclic cofibration.
\end{proof}


Let us make preparations to prove the factorization axiom ${\bf MC5}$(i). A simplicial set $X$ is said to be
${\it fibrant}$ if the map from $X$ to a terminal object $\ast$ is a fibration. Note that $X$ is fibrant
if and only if $X$ is a Kan complex. For each simplicial set 
$X$, we can apply the (functorial) ``acyclic cofibration-fibration'' factorization 
axiom $\bf MC5$(ii) to the map from $X$ to $\ast$ and obtain an acyclic cofibration ${i_X}: X \hookrightarrow \tilde{X}$ with $\tilde{X}$ fibrant. We call $\tilde{X}$ the ${\it fibrant}$ ${\it approximation}$ to $X$. A simplicial map $\pi$ : {$ E \longrightarrow Y$} is called an ${\it F}$-${\it bundle}$ if the pull-back $f^{-1}E$ is isomorphic to ${\Delta^n \times F}$ over $\Delta^n$ for any $n$ and any map $f$ : ${\Delta^n \longrightarrow Y}$.

\begin{lem}
Let $F$ and $Y$ be simplicial sets.\\
$\rm (1)$ Let $\pi:E \longrightarrow {\Lambda}^p_k$ be an $F$-bundle. Then $E$ is a trivial $F$-bundle.\\ 
$\rm (2)$ Let $\pi:E \longrightarrow Y$ be an $F$-bundle. Then there exists a
cartesian square
$$
\begin{picture}(60,60)(-5,-50)
\point{H0}( 0, 0) {$E$}
\point{H1}( 60, 0){$E^{\prime}$}
\point{H2}( 0,-60){$Y$}
\point{H3}(60,-60){$\tilde{Y}$}
\point{H4}( 30, 0){$\hooksuperlongrightarrow$}
\point{H5}( 30, 4){$\iota$}
\arrow{H0}{H2} \midput( -8, -5){$\pi$}
\arrow{H1}{H3} \midput( 2, -5){$\pi^{\prime}$}
\point{H6}( 30,-60){$\hooksuperlongrightarrow$}
\point{H7}( 32,-55){$i_Y$}
\end{picture}
$$
\\
such that $\pi^{\prime}$ : {$ E^{\prime} \longrightarrow \tilde{Y}$} is an $F$-bundle and $\iota$ is an acyclic cofibration.
\end{lem}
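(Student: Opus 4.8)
The plan is to reduce both parts to one principle --- a locally trivial map whose fibre is a Kan complex is a Kan fibration --- applied to the \emph{frame bundle} of $\pi$, whose structure ``group'' is a simplicial group and hence fibrant. For a simplicial set $F$, let $\mathrm{Aut}(F)$ denote the simplicial set whose $n$-simplices are the automorphisms of $\Delta^n\times F$ over $\Delta^n$, with faces and degeneracies induced by the cosimplicial structure of $\Delta^{\bullet}$; composition makes it a simplicial group, hence a Kan complex. For an $F$-bundle $\pi\colon E\to Y$, let $\mathrm{Fr}(\pi)\to Y$ be the simplicial set over $Y$ whose simplices over $\sigma\colon\Delta^n\to Y$ are the isomorphisms $\Delta^n\times F\xrightarrow{\cong}\sigma^*E$ over $\Delta^n$; then a trivialization of $\pi$ over a subcomplex $K\subseteq Y$ is precisely a section of $\mathrm{Fr}(\pi)$ over $K$, and choosing a trivialization of $f^*E$ for each $f\colon\Delta^n\to Y$ identifies $f^*\mathrm{Fr}(\pi)$ with $\Delta^n\times\mathrm{Aut}(F)$, so $\mathrm{Fr}(\pi)\to Y$ is an $\mathrm{Aut}(F)$-bundle. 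Since $\mathrm{Aut}(F)$ is a Kan complex, a straightforward horn-filling argument (pull the horn back to a simplex of the base, where the bundle is trivial, then fill in the fibre) shows that any $F$-bundle with fibrant fibre, in particular $\mathrm{Fr}(\pi)\to Y$ itself, is a Kan fibration.

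For part (1): since $v^*E\cong F$, the bundle $\pi\colon E\to\Lambda^p_k$ has a trivialization over any vertex $v$, i.e.\ $\mathrm{Fr}(\pi)$ has a section over $\{v\}$. The inclusion $\{v\}\hookrightarrow\Lambda^p_k$ is an anodyne extension --- a standard consequence of the theory of anodyne extensions ([4, Section 4 in Chapter I]), $\Lambda^p_k$ being obtained from a vertex by finitely many pushouts of horn inclusions. Since $\mathrm{Fr}(\pi)\to\Lambda^p_k$ is a Kan fibration and anodyne extensions have the left lifting property against Kan fibrations, the section extends over $\Lambda^p_k$; this is the desired isomorphism $E\cong\Lambda^p_k\times F$.

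For part (2): the fibrant approximation $i_Y\colon Y\hookrightarrow\tilde{Y}$ of $\mathbf{MC5}$(ii) is, by its small-object-argument construction with $\mathscr F=\{\Lambda^p_k\hookrightarrow\Delta^p\}$, the colimit of a tower $Y=G^0\hookrightarrow G^1\hookrightarrow\cdots$, where $G^n$ arises from $G^{n-1}$ as a pushout of a coproduct of horn inclusions $\coprod_\lambda\Lambda^{p_\lambda}_{k_\lambda}\hookrightarrow\coprod_\lambda\Delta^{p_\lambda}$ along an attaching map to $G^{n-1}$. I would extend $\pi$ up this tower. Given an $F$-bundle $E_{n-1}\to G^{n-1}$ with $E_0=E$, pulling it back along the attaching map yields an $F$-bundle over $\coprod_\lambda\Lambda^{p_\lambda}_{k_\lambda}$, trivial by part (1), which therefore extends to the trivial bundle over $\coprod_\lambda\Delta^{p_\lambda}$; let $E_n$ be the pushout of $E_{n-1}$ along $\coprod_\lambda(\Lambda^{p_\lambda}_{k_\lambda}\times F)\hookrightarrow\coprod_\lambda(\Delta^{p_\lambda}\times F)$. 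Since a map is an $F$-bundle as soon as its pullback along every nondegenerate simplex of the base is trivial, and the nondegenerate simplices of $G^n$ are those of $G^{n-1}$ together with the new simplices contributed by the $\Delta^{p_\lambda}$, one checks that $E_n\to G^n$ is again an $F$-bundle with $E_n|_{G^{n-1}}=E_{n-1}$. Setting $E'=\operatorname{colim}_n E_n$ gives an $F$-bundle $\pi'\colon E'\to\tilde{Y}$ with $E'|_Y=E$, so the square is cartesian. Finally, each $E_{n-1}\hookrightarrow E_n$ is a pushout of $\coprod_\lambda(\Lambda^{p_\lambda}_{k_\lambda}\times F)\hookrightarrow\coprod_\lambda(\Delta^{p_\lambda}\times F)$, which is anodyne --- the pushout-product of an anodyne extension with a cofibration is anodyne ([4, Section 4 in Chapter I]), and the anodyne class is closed under coproducts and transfinite composition --- so $\iota\colon E=E_0\hookrightarrow E'$ is anodyne, hence an acyclic cofibration by the preceding lemma.

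The main obstacle is part (1), the triviality of $F$-bundles over a horn; once it is in hand, part (2) is essentially formal, its only delicate point being the verification that the successively glued total spaces remain $F$-bundles. The two ingredients that make the frame-bundle argument for part (1) run are that $\mathrm{Aut}(F)$ is a Kan complex (which rests only on the elementary fact that simplicial groups are fibrant) and that the inclusion of a vertex into $\Lambda^p_k$ is anodyne. A reader preferring to avoid the frame bundle can instead prove part (1) by filtering $\Lambda^p_k$ upward from a vertex by pushouts of anodyne subcomplex inclusions $L\hookrightarrow\Delta^q$ and extending the trivialization one cell at a time, the extendability at each stage being once more the Kan property of $\mathrm{Aut}(F)$; part (2) then proceeds verbatim.
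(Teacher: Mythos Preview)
Your argument is correct. Part~(2) matches the paper's proof essentially verbatim: both extend the bundle stage by stage up the small-object tower for $i_Y$, using part~(1) to trivialize over each attached horn, and then conclude that $\iota$ is anodyne because each $E_{n-1}\hookrightarrow E_n$ is a pushout of a coproduct of maps $\Lambda^{p_\lambda}_{k_\lambda}\times F\hookrightarrow\Delta^{p_\lambda}\times F$, which are anodyne by the pushout-product lemma.

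For part~(1) the two proofs share the key ingredient---$\mathrm{Aut}(F)$ is a simplicial group, hence Kan---but organize it differently. The paper chooses trivializations $\varphi_i$ over each face $\Delta^p_i$ ($i\neq k$) separately and then adjusts them, using the Kan property of $\mathrm{Aut}(F)$, so that they agree on overlaps and glue to a global trivialization; this is a cocycle-correction argument run over the face decomposition of $\Lambda^p_k$. You instead pass to the frame bundle $\mathrm{Fr}(\pi)\to\Lambda^p_k$, observe that it is a Kan fibration (fiber $\mathrm{Aut}(F)$), and extend a section from a single vertex along the anodyne inclusion $\{v\}\hookrightarrow\Lambda^p_k$. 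Your version trades the face-by-face bookkeeping for two clean abstractions (frame bundle, anodyne lifting), and makes transparent that the only input is the fibrancy of $\mathrm{Aut}(F)$; the paper's version is more direct and avoids introducing auxiliary objects. Neither is materially shorter, and each unpacks to the other.
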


\begin{proof}
${\rm (1)}$ Since $E$ is trivial over each face ${\Delta}^p_i$ $(i \neq k)$, we can take trivializations $E{\mid}_{{\Delta}^p_i} \xrightarrow[{\cong}]{{\varphi}_i} {\Delta}^p_i \times F $. Consider the simplicial group $Aut(F)$ whose $p$-simplices are the commutative diagrams of the form
$$
\begin{picture}(60,60)(-5,-50)
\point{H0}( 0, 0) {$\Delta^p \times F$}
\point{H1}( 90, 0){$\Delta^p \times F$}
\point{H2}( 45,-45){$\Delta^p$}
\arrow{H0}{H1} \midput( -5, 4){$\varphi$} \midput( -5, -9){$\cong$}
\arrow{H0}{H2}
\arrow{H1}{H2}
\end{picture}
$$
and note that it
 is a Kan complex (cf. [4, p. 12]). Then we can modify $\{{\varphi}_i\}$ so that ${\cup}\hspace{0.5mm}{\varphi}_i$ gives a trivialization of $E$.\\
(2) By (1), we can easily extend $\pi:E \longrightarrow Y$ to an $F$-bundle over $\tilde{Y}$, which is denoted by $\pi^{\prime}$ : {$ E^{\prime} \longrightarrow \tilde{Y}$}. Then {$ E^{\prime}$} is the direct limit of a sequence of the form
$$
E = E(0) \overset{\iota_1} \hooklongrightarrow E(1) \overset{\iota_2} \hooklongrightarrow \cdot\cdot\cdot \overset{\iota_n} \hooklongrightarrow E(n) \hooklongrightarrow \cdot\cdot\cdot 
$$
where each $\iota_n$ fits into a pushout diagram of the form
$$
\begin{picture}(70,70)(0,-60)
\point{H0}( 0, 0){$\underset{\lambda \in \Lambda_n}\coprod{({\Lambda}^{p_\lambda}_{k_\lambda}} \times F)$}
\point{H1}( 120, 0){$\underset{\lambda \in \Lambda_n}\coprod({\Delta}^{p_\lambda} \times F)$}
\point{H2}( 0, -60){$E(n-1)$}
\point{H3}( 120, -60 ){$E(n)$}
\point{H4}( 0, 4) { \hspace{2.1cm}}
\point{H5}( 120, 4){ \hspace{2cm}}
\point{H6}( 60, 4){$\hooksuperlongrightarrow$}
\arrow{H1}{H3}
\arrow{H0}{H2} 
\point{H7}(65,-60){$\hookssuperlongrightarrow$}\midput(60, -25){$\iota_n$} \midput( 140, -35){$.$}
\end{picture}
$$
Note that ${\Lambda}^{p_\lambda}_{k_\lambda} \times F \hooklongrightarrow {\Delta}^{p_\lambda} \times F $ 
is an anodyne extension by [4, Corollary 4.6 in Chapter I]. 
Since the class of anodyne extensions is saturated by definition, $\iota$
is an anodyne extension, and hence an acyclic cofibration by Lemma 2.1.
\end{proof}

\begin{flushleft}
Under our definition of a weak equivalence, we show
\end{flushleft}
\begin{lem}
Let ${f : X \longrightarrow Y}$ be a weak equivalence between Kan complexes. Then $f$ is a homotopy equivalence.
\end{lem}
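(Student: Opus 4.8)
Sketch of the proof plan. The idea is to manufacture a homotopy inverse $g:Y\longrightarrow X$ directly from the defining bijections, in the style of a Yoneda argument, using nothing beyond the hypothesis and the formal properties of the homotopy relation. First I would record the elementary fact that for maps with Kan target the homotopy relation is compatible with composition on both sides: if $h:A\times I\longrightarrow B$ is a homotopy from $a$ to $b$, then $c\circ h\circ(d\times 1_I)$ is a homotopy from $cad$ to $cbd$ whenever the composites make sense. In particular, since $X$ and $Y$ are Kan complexes, pre-composition with $f$ induces a well-defined map $f^{\sharp}:[Y,Z]\longrightarrow[X,Z]$, $[\varphi]\mapsto[\varphi f]$, for every Kan complex $Z$, and by hypothesis this is a bijection; the fact that $\simeq$ is an equivalence relation for Kan targets (so that $[X,X]$, $[X,Y]$, $[Y,X]$, $[Y,Y]$ are genuine sets) is quoted from [4, Lemma 6.1 in Chapter I].

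Next I would apply the hypothesis with $Z=X$. Surjectivity of $f^{\sharp}:[Y,X]\longrightarrow[X,X]$ yields a map $g:Y\longrightarrow X$ with $[gf]=[1_X]$, i.e. $gf\simeq 1_X$; so $g$ is a homotopy left inverse. It remains to show $fg\simeq 1_Y$. For this I would take $Z=Y$ and use injectivity of $f^{\sharp}:[Y,Y]\longrightarrow[X,Y]$: post-composing a homotopy $gf\simeq 1_X$ with $f$ gives $fgf\simeq f$, that is $f^{\sharp}[fg]=[fgf]=[f]=[1_Y f]=f^{\sharp}[1_Y]$, whence $[fg]=[1_Y]$ and $fg\simeq 1_Y$. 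Therefore $g$ is a two-sided homotopy inverse of $f$, and $f$ is a homotopy equivalence.

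I do not expect a genuine obstacle here: once the compatibility of $\simeq$ with composition is in hand, the argument is a short diagram chase with explicit homotopies. The only place the Kan hypothesis on $X$ and $Y$ is essential is in making $[X,X]$, $[X,Y]$, $[Y,X]$, $[Y,Y]$ well-defined and in guaranteeing that $f^{\sharp}$ is the asserted bijection on these sets; the passage $gf\simeq 1_X\Rightarrow fgf\simeq f$ itself needs no further input. (One could equally phrase the whole thing by noting that $f$ becomes an isomorphism in the homotopy category of Kan complexes and invoking the Yoneda lemma, but the explicit construction above is more self-contained.)
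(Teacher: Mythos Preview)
Your argument is correct and essentially identical to the paper's: both produce $g$ from surjectivity of $f^{\sharp}$ at $Z=X$ and then deduce $fg\simeq 1_Y$ from bijectivity at $Z=Y$, the only cosmetic difference being that the paper phrases the second step as ``$g^{\sharp}$ is a right inverse to $f^{\sharp}$, hence a two-sided inverse, now evaluate at $[1_Y]$'' rather than directly invoking injectivity on $[fgf]=[f]$.
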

\begin{proof}
Since $f$ is a weak equivalence, ${f^{\sharp} : [Y,Z]  \longrightarrow [X,Z]}$ is bijective for any Kan complex $Z$. Set $Z = X$. Then there exists a unique homotopy class $[g]$ such that $f^{\sharp}([g]) = [1_X]$ and hence $g \circ f \simeq 1_X$. Thus $g^{\sharp}$ is a right inverse to ${f^{\sharp} : [Y,Z] \longrightarrow [X,Z]}$ for any Kan complex $Z$. Since $f^{\sharp}$ is bijective, $g^{\sharp} \circ f^{\sharp} = 1$ also holds on $[Y,Z]$. By setting $Z = Y$ and evaluating this identity at $[1_Y]$, we have $f \circ g \simeq 1_Y$.
\end{proof}

We use Lemmas 2.2 and 2.3 to prove the following lemma which is necessary
to apply the small object argument and obtain the desired factorization.

\begin{lem}
A map $f:X \longrightarrow Y$ is an acyclic fibration if and only if $f$ has the right lifting property with respect to standard inclusions $\dot{\Delta}^n \longrightarrow {\Delta}^n \hspace{2mm}(n \geq 0)$.
\end{lem}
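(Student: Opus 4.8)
The plan is to prove both directions of the equivalence, treating the ``only if'' direction as routine and the ``if'' direction as the substantive one. For the ``only if'' direction: suppose $f$ is an acyclic fibration. Since $\dot{\Delta}^n\hookrightarrow\Delta^n$ is a cofibration (being a relative $\mathbf{S}$-cell complex, or directly: the generating cofibrations are exactly these boundary inclusions, and any cofibration in our sense lifts against acyclic fibrations by the very definition of cofibration together with the fact that boundary inclusions will be shown to be cofibrations), the lifting property against $f$ follows from axiom $\mathbf{MC4}$(i) once we know boundary inclusions are cofibrations. To see $\dot{\Delta}^n\hookrightarrow\Delta^n$ is a cofibration, note it has the left lifting property with respect to every acyclic fibration: this is precisely the hypothesis of the ``if'' direction applied to those maps, so really the ``only if'' direction is formally easy \emph{provided} one has already set up that the class of maps with the RLP against all $\dot{\Delta}^n\hookrightarrow\Delta^n$ consists of acyclic fibrations — which is the content of the ``if'' direction. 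So the logical order is: first prove the ``if'' direction, then deduce boundary inclusions are cofibrations, then deduce the ``only if'' direction.

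For the ``if'' direction, assume $f:X\longrightarrow Y$ has the RLP with respect to all $\dot{\Delta}^n\hookrightarrow\Delta^n$. First, such $f$ is automatically a Kan fibration, since each horn inclusion $\Lambda^p_k\hookrightarrow\Delta^p$ is built from boundary inclusions (it is a relative cell complex with cells the $\dot{\Delta}^m\hookrightarrow\Delta^m$), so $f$ has the RLP against horn inclusions too; hence $f$ is a fibration. It remains to show $f$ is a weak equivalence. I would first reduce to the case where $Y$ is a Kan complex, then exploit minimal fibration theory. Concretely: using Lemma 2.2(2), or more elementarily by factoring, one replaces $f$ over a fibrant approximation; but the cleanest route is to show directly that a map with the RLP against all boundary inclusions is ``trivial'' enough. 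The key observation is that a fibration $f:X\to Y$ with the RLP against all $\dot{\Delta}^n\hookrightarrow\Delta^n$ has the property that every fiber is contractible, and moreover $f$ admits a section and the inclusion of any fiber into $X$ over a point is a deformation retract-type map; combining these via the minimal fibration machinery shows $f$ induces bijections on $[-,Z]$.

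Here is the route I would actually carry out. Consider the square with $A=\emptyset$: the RLP gives a lift $\Delta^0\to X$ over any $\Delta^0\to Y$, so $f$ is surjective on $0$-simplices, and with $A=\dot\Delta^1$ one gets that any two $0$-simplices of $X$ in the same fiber are connected by a $1$-simplex mapping to a degenerate $1$-simplex — i.e., the fibers are connected; iterating with higher $n$ shows each fiber $F$ has the RLP against all $\dot\Delta^n\hookrightarrow\Delta^n$, hence $F$ is a Kan complex with trivial homotopy in the naive combinatorial sense, and in particular $F\to\ast$ is a weak equivalence (indeed one shows $F$ is contractible: the RLP against $\dot\Delta^n\hookrightarrow\Delta^n$ for a map to $\ast$ says exactly that $F$ is ``injective'', from which $F\simeq\ast$). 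Now to pass from contractible fibers to $f$ being a weak equivalence: replace $f$ by a fibration between Kan complexes using the fibrant approximation $i_Y:Y\hookrightarrow\tilde Y$ and the factorization $\mathbf{MC5}$(ii) applied to $X\to\tilde Y$, getting $X\hookrightarrow X'\to\tilde Y$ with $X\hookrightarrow X'$ an anodyne extension (hence, by Lemma 2.1, an acyclic cofibration and a weak equivalence) and $X'\to\tilde Y$ a fibration between Kan complexes whose fibers are weakly equivalent to those of $f$, hence contractible. A fibration between Kan complexes with contractible fibers is, by the minimal fibration theorem, a trivial bundle up to fiberwise homotopy equivalence, so it is a homotopy equivalence, hence a weak equivalence by Lemma 2.3's converse direction (a homotopy equivalence between Kan complexes induces bijections on $[-,Z]$). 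Chasing the two-out-of-three axiom $\mathbf{MC2}$ around $X\hookrightarrow X'\to\tilde Y$ and $Y\hookrightarrow\tilde Y$ then yields that $f$ is a weak equivalence.

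The main obstacle I anticipate is the step asserting that a fibration between Kan complexes with contractible fibers is a homotopy equivalence; this is exactly where minimal fibration theory does the heavy lifting, and getting the reduction to minimal fibrations clean — extracting a minimal fibration as a fiberwise deformation retract of $X'\to\tilde Y$, checking a minimal fibration with contractible fibers is an isomorphism onto its image of the form $\tilde Y\times F\to\tilde Y$ with $F\simeq\ast$ — requires care. I would expect this to mirror the arguments the paper flags as using minimal fibrations (``see the arguments used in the proofs of Lemmas 2.4 and 3.3''), so the bulk of the work is assembling Lemmas 2.1, 2.2, 2.3 together with the minimal fibration classification, rather than any new idea. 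The remaining bookkeeping — that horn inclusions and boundary inclusions generate each other's cell complexes, and the two-out-of-three chase — is routine.
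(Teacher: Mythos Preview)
Your plan for the ``only if'' direction ($\Longrightarrow$) is circular. You propose to show that $\dot\Delta^n\hookrightarrow\Delta^n$ is a cofibration and then invoke $\mathbf{MC4}$(i), and you claim this follows once the ``if'' direction is established. But recall that in this paper a cofibration is \emph{defined} as a map with the LLP against all acyclic fibrations; hence ``$\dot\Delta^n\hookrightarrow\Delta^n$ is a cofibration'' is literally the statement ``every acyclic fibration has the RLP against $\dot\Delta^n\hookrightarrow\Delta^n$'', which is exactly the $(\Longrightarrow)$ direction you are trying to prove. The ``if'' direction only gives the inclusion $\{\text{RLP against boundaries}\}\subseteq\{\text{acyclic fibrations}\}$; passing to left lifting classes reverses the inclusion, so you obtain $\mathrm{LLP}(\text{acyclic fibrations})\subseteq \mathrm{LLP}(\mathrm{RLP}(\text{boundaries}))$, which does not place the boundary inclusions inside the cofibrations. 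In the paper $(\Longrightarrow)$ is the substantive direction: one takes a minimal subfibration $\varphi:E\to Y$ of $f$, uses Lemma~2.2 to extend the resulting $M$-bundle over the fibrant approximation $\tilde Y$, applies Lemma~2.3 and the homotopy exact sequence to force $M=\ast$, so $\varphi$ is an isomorphism; the lift is then built by an explicit anodyne lifting argument using the vertical deformation retraction onto $E$. None of this is available from $(\Longleftarrow)$ alone.

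For $(\Longleftarrow)$ your outline is broadly in the right spirit but takes an unnecessary detour with a real gap. You factor $X\to\tilde Y$ via $\mathbf{MC5}$(ii) as $X\hookrightarrow X'\to\tilde Y$ and assert that the fibers of $X'\to\tilde Y$ are ``weakly equivalent to those of $f$''; this is not justified---the small object argument glues in horns that change the fibers, and you have given no comparison. The paper avoids this entirely: it takes a minimal subfibration $\varphi:E\to Y$ of $f$ directly over $Y$ (no passage to $\tilde Y$), observes that $\varphi$ inherits the RLP against $\dot\Delta^n\hookrightarrow\Delta^n$ because $E$ is a vertical deformation retract of $X$, and then argues that if the minimal fiber $M$ had a nontrivial $\pi_n$ one could write down an unsolvable boundary lifting problem. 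Hence $M=\ast$, $\varphi$ is an isomorphism, and $f$ is a homotopy equivalence. This is both shorter and gap-free; your fibrant-replacement maneuver is not needed for this direction.
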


\begin{proof}
($\Longleftarrow$) Suppose that $f$ has the right lifting property with respect to $\{\dot{\Delta}^n \longrightarrow {\Delta}^n \hspace{2mm}(n \geq 0)\}$ . The standard inclusion ${\Lambda}^n_k$ ${\longrightarrow {\Delta}^n}$ is the composite 
$$ {\Lambda}^n_k \longrightarrow \dot{\Delta}^n \longrightarrow {\Delta}^n$$
and ${\Lambda}^n_k$ ${\longrightarrow \dot{\Delta}^n}$ is a pushout of 
${\dot{\Delta}^{n-1} \longrightarrow {\Delta}^{n-1}}$. Hence it follows that
$f$ is a fibration (cf. [4, Lemma 4.1 in Chapter I]). Thus we take a minimal subfibration ${\varphi}:E \longrightarrow Y$ of $f$. Since $E$ is a vertical deformation retract of $X$, $\varphi$ also has the right lifting property with respect to $\{\dot{\Delta}^n \longrightarrow {\Delta}^n \hspace{2mm}(n \geq 0)\}$. Suppose that the fiber $M$ of $\varphi$ has nontrivial ${\pi}_n$ for some $n$ and some base point $m$. Then we can take a nontrivial element [$h$] $\in$ $\pi_n(M,m)$ and consider a lifting problem
$$
\xymatrix @r@R=13mm @r@C=17mm{
{\dot{\Delta}}^{n+1} \ar[d] \ar[r]^v \ar@{}[dr]   & E \ar[d]^{\varphi} \\
{\Delta}^{n+1}  \ar@^{-->}[ur] \ar[r]^u & Y ,\\
}
$$
where $u$ is the constant map to $\varphi(m)$ and $v$ is the sum of $h$ on the $(n+1)$-th face and the constant map to $m$ on the $(n+1)$-th horn. It is clearly unsolvable, which is a contradiction. Thus $M$ is a minimal complex whose homotopy groups are trivial. By the construction and the uniqueness of a minimal subcomplex [4, Proposition 10.3 and Lemma 10.4 in Chapter I], $M$ is a terminal object, hence ${\varphi}:E \longrightarrow Y$ is an isomorphism. Therefore $f:X \longrightarrow Y$ is a homotopy equivalence and hence a weak equivalence.\\
($\Longrightarrow$) Suppose that $f$ is an acyclic fibration. Then, for a given commutative solid arrow diagram
$$
\xymatrix @r@R=13mm @r@C=17mm{
{\dot{\Delta}}^n \ar[d] \ar[r]^v \ar@{}[dr]   & X \ar[d]^{f} \\
{\Delta}^n  \ar@^{-->}[ur] \ar[r]^u & Y,\\
}
$$
we must construct the dotted arrow making the diagram commute. We take a minimal subfibration ${\varphi}:E \longrightarrow Y$ of $f:X \longrightarrow Y$ and write the inclusion $i$. We fix a vertical retraction ${r : X \longrightarrow E}$ and a vertical homotopy ${R : X \times I \longrightarrow X}$ from $1_X$ to $ir$ such that the composite ${E \times I \overset{i \times 1}\hooklongrightarrow X \times I \overset{R}\longrightarrow X}$ is the constant homotopy of $i$.

First let us show that $\varphi$ is an isomorphism. Note that $\varphi$ is a fiber bundle whose fiber $M$ is a minimal complex ([4, p. 47 and Collary 10.8 in Chapter I]). Thus we have an extension of an $M$-bundle $\varphi$
$$
\begin{picture}(60,60)(-5,-50)
\point{H0}( 0, 0) {$E$}
\point{H1}( 60, 0){$E^{\prime}$}
\point{H2}( 0,-60){$Y$}
\point{H3}(60,-60){$\tilde{Y}$}
\point{H4}(30, 0){$\hooksuperlongrightarrow$}
\point{H5}(30,-60){$\hooksuperlongrightarrow$}
\point{H6}(30, 4){$\iota$}
\point{H7}(32,-55){$i_Y$}
\arrow{H0}{H2} \midput( -8, -5){$\varphi$}
\arrow{H1}{H3} \midput( 2, -5){$\varphi^{\prime}$}
\end{picture}
$$
\\
by Lemma 2.2. Since $\varphi^{\prime}$ is a fiber bundle with Kan complex fiber, $\varphi^{\prime}$ is a fibration. Thus both of $\tilde{Y}$ and $E^{\prime}$ are Kan complexes and $\varphi^{\prime}$ is a homotopy equivalence by Lamma 2.3. Hence $M$ has trivial homotopy groups by the homotopy exact sequence ([4, Lemma 7.3 in Chapter I]). This implies that $M$ is a terminal object, which shows that $\varphi$ is an isomorphism.


Now consider the following lifting problem
$$
\xymatrix @r@R=13mm @r@C=17mm{
\dot{\Delta}^n \times I \cup {\Delta}^n \times (1) \ar[d] \ar[r]^{\hspace{1.3cm}{V}} \ar@{}[dr]   & X \ar[d]^f \\
{\Delta}^n \times I \ar@^{-->}[ur] \ar[r]^{\hspace{3mm}{U}} & Y \\
}
$$
where $U$ is the composite
$$
\begin{picture}(70,0)(25,5)
\point{H0}( 0, 0){${\Delta}^n \times I$}
\point{H1}( 70, 0){${\Delta}^n$}
\point{H2}( 140, 0){$Y$}
\arrow{H0}{H1} \midput( -3 ,5){$proj$}
\arrow{H1}{H2} \midput( -3 ,5){$u$}
\end{picture}
$$   
\begin{flushleft}
and $V$ is the sum of
\end{flushleft}

$$
\begin{picture}(70,0)(-5,0)
\point{H0}( -40, 0){$\dot{\Delta}^n \times I$}
\point{H1}( 30, 0){$X \times I$}
\point{H2}( 100, 0){$X$}
\arrow{H0}{H1} \midput( -10 ,5){$v \times 1$}
\arrow{H1}{H2} \midput( -3 ,4){$R$}
\end{picture}
$$
\begin{flushleft}
and 
\end{flushleft}
$$
\begin{picture}(70,0)(-5,0)
\point{H0}( -40, 0){${\Delta}^n \times (1)$}
\point{H1}( 70, 0){$E$}
\point{H2}( 105, 0){$X$      .}
\point{H3}( 87, -0.7){$\hooklongrightarrow$} 
\point{H4}( 87,  6){$i$}
\arrow{H0}{H1} \midput( -25 ,5){$can. lifting$}
\end{picture}
$$
\\
(Since $\varphi$ is an isomorphism, the canonical lifting of $u$ is determined.) Then we obtain the dotted arrow making the diagram commute since the left vertical arrow is an anodyne extension. Its restriction to ${\Delta}^n \times (0)$ is a solution of the original lifting problem.
\end{proof}

Since $\dot{\Delta^n}$ is sequentially small, we can apply the small object 
argument ([3, p. 104]) to ${\mathscr F^{\prime}} = \{\dot{\Delta^n} \longrightarrow {\Delta}^n \hspace{1mm}(n \geq 0)\}$. Then we obtain a factorization
$$
\begin{picture}(70,70)(-5,-50)
\point{H0}( 0, 0){$X$}
\point{H1}( 70, 0){$G^{\infty}(\mathscr F^{\prime},f)$}
\point{H2}( 70,-70){$Y$}
\point{H3}(150,-20){$.$}
\arrow{H0}{H1} \midput(  0, 3){\hss{$j_{\infty}$}\hss}
\arrow{H0}{H2} \midput( -5, -5){\cbox{$f$}}
\arrow{H1}{H2} \midput( 0, -5){\hspace{1mm}{$q_{\infty}$}}
\end{picture}
$$
\\
\newline
such that $q_{\infty}$ is an acyclic fibration. Note that ${\dot{\Delta^n} \longrightarrow {\Delta}^n }$ is a cofibration by Lemma 2.4, and observe 
that the class of monomorphisms which are cofibrations are saturated 
(cf. [4, Lemma 4.1 in Chapter I]). Then it is seen that $j_{\infty}$ is a
cofibration. Thus we have the desired factorization, which proves the axiom ${\bf MC5}$(i).

${\bf MC4}$(i) is immediate from the definition of cofibrations. For ${\bf MC4}$(ii), suppose that $i:A \longrightarrow B$ is an acyclic cofibration; we have to show that $i$ has the left lifting property with respect to fibrations. Use the construction in the proof of ${\bf MC5}$(ii) to factor $i:A \longrightarrow B$ as a composite
$$
\begin{picture}(0,0)(40,0)
\point{H0}( 0, 0){$A$}
\point{H1}( 50, 0){$A^{\prime}$}
\point{H2}( 100, 0){$B$}
\arrow{H0}{H1} \midput( -5 ,5){$i_{\infty}$}
\arrow{H1}{H2} \midput( -5, 5){$p_{\infty}$}
\end{picture}
$$
and consider the following commutative solid arrrow diagram

$$
\xymatrix @r@R=13mm @r@C=17mm{
A \ar[d]^i \ar[r]^{i_{\infty}} \ar@{}[dr]   & A^{\prime} \ar[d]^{p_{\infty}} \\
B \ar@^{-->}[ur]^l \ar[r]^{\hspace{3mm}{1_B}} & B.\\
}
$$
Since $p_{\infty}$ is an acyclic fibration by ${\bf MC2}$, the dotted arrow $l$ exists, making the diagram commue. Thus we see that $i$ is a retract of $i_{\infty}$.
 As was seen in the proof of ${\bf MC5}$(ii), $i_{\infty}$ is an anodyne 
extension, and hence so is $i$. Thus $i$ has the left lifting property with respect to fibrations (cf. [4, Corollary 4.3 in Chapter I]).
\setcounter{thmcounter}{1}
\begin{rem}
In the proof of ${\bf MC4}$(ii), we have shown that any acyclic cofibration is
an anodyne extension. 
The converse is shown in Lemma 2.1. Therefore the class of acyclic cofibrations in the model category ${\bf S}$ is just the class of anodyne extensions. 
\end{rem}

\section{Cofibrations and weak equivalences in $\bf S$}
Let us begin by identifying the cofibrations in the model category $\bf S$.
We denote by $\emptyset$ an initial object in the category $\bf S$. A simplicial set
$X$ is said to be ${\it cofibrant}$ if the map from $\emptyset$ to $X$ is a cofibration.
\stepcounter{thmcounter}
\begin{prop}
Let $i:A \longrightarrow B$ be a map in $\bf S$. Then the following are equivalent:
\newline
\newline
\hspace{2.5em}${\rm (i)}$ $i$ is a cofibration.

\hspace{1.3em}${\rm (ii)}$ $i$ is a monomorphism.

\hspace{1.1em}${\rm (iii)}$ $i$ is a degreewise injection.
\newline
\newline
In particular, any simplicial set is cofibrant.
\end{prop}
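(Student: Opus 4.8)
\emph{Proof proposal.} The plan is to settle (ii)$\Leftrightarrow$(iii) at once, to derive (iii)$\Rightarrow$(i) from Lemma~2.4 together with the Gabriel--Zisman description of monomorphisms, and to obtain (i)$\Rightarrow$(ii) by a retract argument based on the factorization $\mathbf{MC5}$(i) already constructed. The equivalence of (ii) and (iii) is purely formal: since $\mathbf{S}=Set^{\Delta^{op}}$ and limits are computed objectwise, $i$ is a monomorphism precisely when each $i_n\colon A_n\to B_n$ is injective.

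For (iii)$\Rightarrow$(i), recall from Lemma~2.4 that an acyclic fibration is exactly a map with the right lifting property with respect to the boundary inclusions $\dot{\Delta}^n\to\Delta^n\ (n\ge 0)$; hence, by the definition of a cofibration, every such boundary inclusion is a cofibration. The class of cofibrations is defined by a left lifting property and is therefore saturated, so it contains the saturated class generated by the boundary inclusions, which by [4, Lemma~4.1 in Chapter~I] is exactly the class of monomorphisms. Thus every degreewise injection is a cofibration.

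For (i)$\Rightarrow$(ii), I would take a cofibration $i\colon A\to B$ and apply the small object argument of $\mathbf{MC5}$(i) to write $i=q_\infty j_\infty$, with $q_\infty$ an acyclic fibration and $j_\infty$ a cofibration which, being a sequential composite of pushouts of coproducts of boundary inclusions, is a monomorphism (as was already observed in the proof of $\mathbf{MC5}$(i)). Since $i$ is a cofibration and $q_\infty$ an acyclic fibration, the commutative square with top $j_\infty$, left $i$, right $q_\infty$ and bottom $1_B$ has a diagonal filler, which displays $i$ as a retract of $j_\infty$ in the arrow category; a retract of a monomorphism is a monomorphism, so $i$ is one. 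Finally, for any simplicial set $X$ the map $\emptyset\to X$ is degreewise injective, hence a cofibration, so $X$ is cofibrant. I expect the only ingredient that is not self-contained to be the appeal to [4, Lemma~4.1 in Chapter~I] identifying monomorphisms with the saturated closure of the boundary inclusions; this, rather than the routine retract bookkeeping, is the real crux, although it is a standard fact that has already been used in the paper.
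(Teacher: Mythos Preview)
Your proof is correct and is exactly the argument the paper intends by its reference to Quillen [13, II.2.1 and II.3.15]: (iii)$\Rightarrow$(i) by showing that monomorphisms lie in the saturated class generated by the boundary inclusions (which are cofibrations by Lemma~2.4), and (i)$\Rightarrow$(ii) by the standard retract argument using the $\mathbf{MC5}$(i) factorization. The only adjustment is bibliographic: [4, Lemma~4.1 in Chapter~I] records the closure properties of a saturated class, but the identification of monomorphisms with the saturated closure of $\{\dot{\Delta}^n\to\Delta^n\}$ requires the relative skeletal decomposition (cf.\ [4, p.~8] and the paper's Section~4), which you rightly single out as the one nontrivial input.
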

\begin{proof}
The equivalence of (ii) and (iii) is obvious. For the equivalence of (i) and (iii), the same arguments as in [13, 2.1 and 3.15 in Chapter II] apply.
\end{proof}
$\textit Proof$ $\textit of$ $\textit Proposition$ $\textit 1.2.$ In a model category, the class of fibrations and the class of cofibrations determine the class of weak equivalences ([5, Proposition 7.2.7]). Thus Proposition 3.1 and the definition of fibrations imply that our model category structure on $\bf S$ coincides with those of Quillen and Goerss-Jardine. 
{\hfill $\Box$ \endtrivlist}
\vspace{5mm}

Next, let us identify the weak equivalences in $\bf S$. A map ${f: X \longrightarrow Y}$
between Kan complexes is called a weak homotopy equivalence if ${f_{\sharp} : \pi_0(X) \longrightarrow \pi_0(Y)}$ is a bijection and ${f_{\sharp} : \pi_i(X,x) \longrightarrow \pi_i(Y,f(x))}$ is an isomorphism for any 0-simplex $x$ of $X$ and any $i$ $>$ 0.
\newline
\begin{prop}
Let $f:X \longrightarrow Y$ be a map in $\bf S$. Then the following are equivalent:
\newline
\newline
\hspace{2.5em}${\rm (i)}$ $f$ is a weak equivalence.

\hspace{1.3em}${\rm (ii)}$ $ \tilde{f}:\tilde{X} \longrightarrow \tilde{Y}$ is a homotopy equivalence.

\hspace{1.1em}${\rm (iii)}$ $ \tilde{f}:\tilde{X} \longrightarrow \tilde{Y}$ is a weak homotopy equivalence.
\end{prop}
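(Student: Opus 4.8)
The plan is to reduce the whole statement to a comparison of the three notions for the single map $\tilde f\colon\tilde X\longrightarrow\tilde Y$ of Kan complexes, exploiting the commutative square $\tilde f\circ i_X=i_Y\circ f$ furnished by the functoriality of the fibrant approximation. Since $i_X$ and $i_Y$ are acyclic cofibrations by construction (indeed anodyne extensions, cf.\ Remark 2.5), they are weak equivalences, so for every Kan complex $Z$ the maps $i_X^{\sharp}\colon[\tilde X,Z]\longrightarrow[X,Z]$ and $i_Y^{\sharp}\colon[\tilde Y,Z]\longrightarrow[Y,Z]$ are bijections; the relation $i_X^{\sharp}\circ\tilde f^{\sharp}=f^{\sharp}\circ i_Y^{\sharp}$ then shows that $f$ is a weak equivalence if and only if $\tilde f$ is. On the other hand a homotopy equivalence induces a bijection on $[-,Z]$ for every Kan complex $Z$ (a homotopy inverse yields a two-sided inverse, since homotopic maps induce the same map on $[-,Z]$), so if $\tilde f$ is a homotopy equivalence it is a weak equivalence; conversely, $\tilde X$ and $\tilde Y$ being Kan complexes, Lemma 2.3 turns a weak equivalence $\tilde f$ into a homotopy equivalence. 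This establishes ${\rm (i)}\Leftrightarrow{\rm (ii)}$.

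The implication ${\rm (ii)}\Rightarrow{\rm (iii)}$ is the elementary fact that a homotopy equivalence of Kan complexes induces a bijection on $\pi_0$ and isomorphisms on all the combinatorial homotopy groups: homotopic maps induce the same homomorphism on $\pi_{\ast}$, and a homotopy inverse induces the inverse. No input beyond the basic theory of simplicial homotopy groups is needed here.

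The real work is in ${\rm (iii)}\Rightarrow{\rm (ii)}$, which is the simplicial Whitehead theorem: a weak homotopy equivalence between Kan complexes is a homotopy equivalence. One may simply invoke it from $[4]$ as a basic result of simplicial homotopy theory independent of the model structure. Alternatively---and more in the spirit of this paper---one reproves it by the device already used for Lemma 2.4: factor $\tilde f=p\circ j$ with $j$ an anodyne extension and $p$ a fibration (by ${\bf MC5}$(ii)), so that the total space of $p$ is again a Kan complex and $j$, being an anodyne extension, is an acyclic cofibration by Lemma 2.1 and hence a homotopy equivalence by Lemma 2.3; therefore $p$ is a weak homotopy equivalence by two-out-of-three. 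Now pick a minimal subfibration $\varphi\colon E\longrightarrow\tilde Y$ of $p$; by the theory of minimal fibrations it is a fiber bundle whose fiber $M$ is a minimal complex, and since $E$ is a vertical deformation retract of the total space of $p$, the map $\varphi$ is again a weak homotopy equivalence, so its long exact homotopy sequence forces the homotopy groups of $M$ to vanish. By the uniqueness of minimal complexes $M$ is a point, $\varphi$ is an isomorphism, the total space of $p$ deformation-retracts onto a copy of $\tilde Y$, and $p$---hence $\tilde f=p\circ j$---is a homotopy equivalence. Either route completes the proof; the simplicial Whitehead theorem, in one form or the other, is the crux and the only point at which genuine simplicial input, as opposed to formal manipulation, is required.
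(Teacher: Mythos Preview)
Your proposal is correct and follows essentially the same route as the paper: the paper first observes that by Lemma~2.3 (together with the fact that $i_X,i_Y$ are acyclic cofibrations, hence weak equivalences) the equivalence ${\rm(i)}\Leftrightarrow{\rm(ii)}$ reduces to the case of Kan complexes, and then isolates ${\rm(ii)}\Leftrightarrow{\rm(iii)}$ as a separate simplicial Whitehead theorem (Lemma~3.3) whose nontrivial direction is proved exactly as in your second alternative---factor via ${\bf MC5}$(ii), reduce to a fibration, pass to a minimal subfibration, and conclude that the fiber is a point. The only minor point worth noting is that the paper deliberately \emph{does not} cite the Whitehead theorem from May~[8] but reproves it precisely because the model structure yields a shorter argument; so your first alternative of simply invoking it would be a slight departure in spirit, though not in correctness.
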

By Lemma 2.3, it is enough to show the following lemma. It is a simplicial analogue of the Whitehead theorem and is proven in [8, $\S$12]. But we give a simple proof using the model structure of the category $\bf S$.
\begin{lem}
Let ${f : X \longrightarrow Y}$ be a map between Kan complexes. Then the following are equivalent:
\newline
\newline
\hspace{0.25em}$\rm (i)$ ${f : X \longrightarrow Y}$ is a homotopy 
equivalence.\\
$\rm (ii)$ ${f : X \longrightarrow Y}$ is a weak homotopy equivalence.
\end{lem}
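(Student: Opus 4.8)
The plan is to prove the two implications separately; (i)$\Rightarrow$(ii) is routine, while (ii)$\Rightarrow$(i) carries the weight. For (i)$\Rightarrow$(ii) I would note that a homotopy equivalence $f$ has a homotopy inverse $g$, and that homotopic maps between Kan complexes induce the same homomorphisms on $\pi_0$ and on the higher homotopy groups (after the standard identification of $\pi_n$ at homotopic basepoints); hence $f_{\sharp}\circ g_{\sharp}$ and $g_{\sharp}\circ f_{\sharp}$ are identities up to basepoint-change isomorphisms, so $f$ is a weak homotopy equivalence. No model structure is needed here.

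For (ii)$\Rightarrow$(i) I would first reduce to the case where $f$ is a fibration. Applying the functorial factorization $\mathbf{MC5}$(ii), write $f=p\circ i$ with $i\colon X\to Z$ an acyclic cofibration and $p\colon Z\to Y$ a fibration; since $Y$ is a Kan complex and $p$ is a fibration, $Z$ is a Kan complex. Being an acyclic cofibration, $i$ is in particular a weak equivalence, hence a homotopy equivalence by Lemma 2.3 (it is a map between Kan complexes), and therefore a weak homotopy equivalence by the implication just proved. Since $f$ and $i$ are weak homotopy equivalences, so is $p$, by the $2$-out-of-$3$ property of weak homotopy equivalences (functoriality of $\pi_0$ and the $\pi_n$). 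It then suffices to show that a fibration $p\colon Z\to Y$ between Kan complexes which is a weak homotopy equivalence is a homotopy equivalence, for then $f=p\circ i$ is a composite of homotopy equivalences.

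To handle this core case I would rerun the minimal-fibration argument from the proof of Lemma 2.4. Take a minimal subfibration $\varphi\colon E\to Y$ of $p$, with inclusion $j\colon E\hookrightarrow Z$, a vertical retraction $r\colon Z\to E$ (so $r\circ j=1_{E}$ and $\varphi\circ r=p$), and a vertical homotopy $R$ from $1_{Z}$ to $j\circ r$. Then $j$ is a homotopy equivalence, $E$ is a Kan complex, and $\varphi=p\circ j$ is a weak homotopy equivalence. Being a minimal fibration, $\varphi$ is a fiber bundle whose fiber $M$ is a minimal complex ([4, p.~47 and Corollary 10.8 in Chapter I]), and $M$ is a Kan complex. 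Feeding ``$\varphi$ is a weak homotopy equivalence'' into the homotopy exact sequence of $\varphi$ ([4, Lemma 7.3 in Chapter I]) gives $\pi_{i}(M,m)=0$ for all $i\ge 1$ and that $M$ is connected; thus $M$ is a minimal complex with trivial homotopy groups, hence a terminal object by the uniqueness of a minimal subcomplex ([4, Proposition 10.3 and Lemma 10.4 in Chapter I]), so $\varphi$ is an isomorphism. Finally $s:=j\circ\varphi^{-1}\colon Y\to Z$ satisfies $p\circ s=1_{Y}$ and $s\circ p=j\circ\varphi^{-1}\circ\varphi\circ r=j\circ r\simeq 1_{Z}$, so $p$ is a homotopy equivalence.

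The hard part is this core case: passing to a minimal subfibration, invoking that minimal fibrations are fiber bundles, and collapsing the fiber to a point via the homotopy exact sequence together with the uniqueness of minimal subcomplexes. Everything else — the functorial factorization and the $2$-out-of-$3$ bookkeeping for weak homotopy equivalences — is routine, and since the core case is a localized reuse of the technique already developed for Lemma 2.4, the whole argument stays short once the model structure is available.
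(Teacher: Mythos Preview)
Your proof is correct and follows essentially the same route as the paper's: reduce to a fibration via $\mathbf{MC5}$(ii) and Lemma 2.3, pass to a minimal subfibration, use the homotopy exact sequence to kill the fiber's homotopy groups, and conclude that the minimal subfibration is an isomorphism. Your write-up is more explicit about the reduction step (in particular the $2$-out-of-$3$ bookkeeping for weak homotopy equivalences and the invocation of (i)$\Rightarrow$(ii) for $i$ and $j$) and about why the fiber has trivial homotopy, but these are exactly the details the paper leaves implicit.
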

\begin{proof}
(i) $\Longrightarrow$ (ii) Obvious.\\
(ii) $\Longrightarrow$ (i) By the functorial factorization of ${\bf MC5}$(ii) and Lemma 2.3, we can assume that ${f}$ : ${X} \longrightarrow {Y}$ is a fibration. Take a minimal subfibration $\varphi$ : $E \longrightarrow {Y}$ of ${f}$ and a vertical deformation retraction ${r : X \longrightarrow E}$. Since the fiber $M$ of $\varphi$ is a minimal complex whose homotopy groups are trivial, $M$ is a terminal object, hence $\varphi$ : $E \longrightarrow{Y}$ is an isomorphism. Note that ${f = \varphi r}$ holds, and that $r$ is a homotopy equivalence. Then it follows that $f$ is a homotopy equivalence.
\end{proof}

\section{Alternative proof of Lemma 2.4}
\begin{proof}
($\Longleftarrow$) Suppose that $f$ has the right lifting property with respect to \{$\dot{\Delta}^n \longrightarrow \Delta^n (n \geq 0)$\}. From the skeletal decomposition of a degreewise injection (the relative version of the decomposition in [4, p. 8])
 and [4, Lemma 4.1 in Chapter I], $f$ has the right lifting property with respect to the class of degreewise injections. From this, let us see that $f$ is an acyclic fibration.

It is easily seen that $f$ is a fibration. In order to see that $f$ is a weak equivalence, take a solution $s$ to a lifting problem 
$$
\xymatrix @r@R=13mm @r@C=17mm{
\emptyset \ar[d] \ar[r] & X \ar[d]^f\\
Y \ar@^{-->}^s [ur] \ar[r]^{1_Y} & Y{,}\\
}
$$
which gives rise to a retract diagram
$$Y \overset{s} \longrightarrow X \overset{f} \longrightarrow Y$$
in the overcategory ${\bf S}/Y$. Next, take a solution $h$ to a lifting problem
$$
\xymatrix @r@R=13mm @r@C=19mm{
X \times \dot{I} \ar[d] \ar[r]^{1_X + sf} & X \ar[d]^f\\
X \times I \ar@^{-->}^h [ur] \ar[r]^{\hspace{3mm}f \hspace{0.5mm} \circ \hspace{0.5mm} proj} & Y ,\\
}
$$
which shows that $Y$ is a deformation retract of $X$ over $Y$. Thus $f$ is a homotopy equivalence and hence a weak equivalence.\\
($\Longrightarrow$) Suppose that $f$ is an acyclic fibration and ${\varphi}^{\prime}$ : $E^{\prime} \longrightarrow \tilde{Y}$ is the $M$-bundle constructed in the original proof. First let us show that there exists a solution $l$ to a lifting problem of the form
$$
\xymatrix @r@R=13mm @r@C=17mm{
\dot{\Delta}^n \ar[d] \ar[r]^v & E^{\prime} \ar[d]^{{\varphi}^{\prime}}\\
{\Delta}^n \ar@^{-->} [ur]^l \ar[r]^{u} & \hspace{1mm}\tilde{Y}.  \\
}
$$
Since ${\varphi}^{\prime}$ is surjective, we can take a lifting $l_0$ of $u$. Since ${\varphi}^{\prime}$ is a homotopy equivalence by Lemma 2.3, we have $l_0 \mid_{\dot{\Delta}^n} \simeq v$. Thus we can take a 1-simplex $m_0$ of the function complex ${\bf Hom}$($\dot{\Delta}^n$,$E^{\prime}$) connecting $l_0 \mid_{\dot{\Delta}^n}$ to $v$. Note that $\varphi^{\prime}_\sharp$ : ${\bf Hom}$($\dot{\Delta}^n$,$E^{\prime}$) $\longrightarrow$ ${\bf Hom}$($\dot{\Delta}^n$,$\tilde{Y}$) is a fibration which is a homotopy equivalence between Kan complexes (cf. [4, p. 21])
, and that ${a :=}$ $\varphi^{\prime}_\sharp(m_0)$ determines an element of 
${\pi_1({\bf Hom}(\dot{\Delta}^n,\tilde{Y}), \hspace{1mm}u\mid_{\dot{\Delta}^n})}$. Thus we take a representative $\tilde{b}$ of 
${\pi_1(\varphi^\prime_\sharp)^{-1}([a]^{-1}) \in \pi_1({\bf Hom}(\dot{\Delta}^n,E^{\prime}),v)}$ and set 
${b = \varphi^\prime_\sharp(\tilde{b})}$. Then the map
$$(b, 0, a) : \dot{\Delta}^2 \longrightarrow {\bf Hom}(\dot{\Delta}^n,\tilde{Y})$$
has a filler ${\gamma : \Delta^2 \longrightarrow {\bf Hom}(\dot{\Delta}^2,\tilde{Y})}$, where ${(b, 0, a)}$ denotes the map whose restrictions to the 0-th, 
1st, and 2nd faces are $b$, the constant map to $u\mid_{\dot{\Delta}^n}$, and
$a$ respectively. Let $\beta$ denote the sum of $m_0$ and $\tilde{b}$, and
consider the lifting problem
$$
\xymatrix @r@R=13mm @r@C=20mm{
\Lambda^2_1 \ar[d] \ar[r]^{\hspace{-1cm}\beta} & {\bf Hom}(\dot{\Delta}^n, E^\prime) \ar[d]^{{\varphi}^\prime_\sharp}\\
{\Delta}^2\ar@^{-->} [ur]^\delta \ar[r]_{\hspace{-1cm}\gamma} & {\bf Hom}(\dot{\Delta}^n, \tilde{Y}).\\
}
$$
Since $\varphi^\prime_\sharp$ is a fibration, there exists a solution $\delta$.
Then the restriction of $\delta$ to the first face gives a vertical homotopy
$m$ connecting $\ell_0 \mid_{\dot{\Delta}^n}$ to $v$. 

Next, consider the lifting problem

$$
\xymatrix @r@R=13mm @r@C=20mm{
{\Delta}^n \times (0) \cup \dot{\Delta}^n \times I \ar[d] \ar[r]^{\hspace{1cm}l_0 + m} & E^{\prime} \ar[d]^{{\varphi}^{\prime}}\\
{\Delta}^n \times I \ar@^{-->} [ur]^h \ar[r]_{u\hspace{0.5mm}\circ \hspace{0.5mm} proj} & \tilde{Y}.\\
}
$$
It has a solution $h$ since the left vertical arrow is an anodyne extension. Set $l$ = $h\mid_{{\Delta}^n \times (1)}$, which is a solution of the original problem.\\
\hspace{1em}We can show that $f$ has the right lifting property with respect to $\{ \dot{\Delta}^n \longrightarrow 
\Delta^n (n \geq 0)\}$ by a similar argument to that used in the original proof.
\end{proof}
$\bf Acknowledgments$ The author would like to thank Prof. N. Oda for his valuable comments on an earlier version of this paper.

\end{document}